\newtheorem{theo}{Theorem}[section]
\newtheorem{coro}[theo]{Corollary}
\newtheorem{lemma}[theo]{Lemma}
\theoremstyle{definition}
\newtheorem*{pr}{Problem}
\def\Con{\operatorname{Con}}
\def\ISP{\operatorname{ISP}}
\def\I{\operatorname{I}}
\def\S{\operatorname{S}}
\def\P{\operatorname{P}}
\def\M{\operatorname{M}}
\def\rng{\operatorname{rng}}
\newcommand{\twiddle}[1]{\smash{\underset{\smash{\raise.2ex\hbox{$\sim$}}}
              {\mathbf{#1}}}\vphantom{#1}}
\def\MT{\twiddle M}
\def\A{\mathbb A}
\def\B{\mathbb B}
\def\M{\mathbb M}
\def\L{\mathbb L}
\def\X{\mathbb X}
\def \c {^{\circ}}
\def \cc{^{\circ\circ}}
\tikzset{%
 % elements
 shaded/.style={draw, shape=circle, fill=black!35, inner sep=1.4pt},
 unshaded/.style={draw, shape=circle, fill=white, inner sep=1.4pt},
 % lines
 order/.style={thin},
 % labels
 label/.style={shape=rectangle, inner sep=6pt},
 auto,
 % arrows
 map/.style={->, densely dashed, shorten >=5pt, shorten <=5pt, >=stealth', looseness=1.1},
 curvy/.style={->, densely dashed, shorten >=5pt, shorten <=12pt, >=stealth', looseness=1.1},
 operationgj/.style={->, densely dashed, shorten >=5pt, shorten <=20pt, >=stealth', looseness=1.1},
 relationlejk/.style={->, shorten >=5pt, shorten <=5pt, >=stealth'},
 }
\begin{document}

%%%%%%%%%%%%%%%%%%%%%%%%%%%%%%%%%%%%%%%%%%%%%%%%%%%%%%%%%%%%%%%%%%%%%%
%% FRONT MATTER
%%%%%%%%%%%%%%%%%%%%%%%%%%%%%%%%%%%%%%%%%%%%%%%%%%%%%%%%%%%%%%%%%%%%%%

\title[Perfect
extensions of de Morgan algebras]{Perfect
extensions of de Morgan algebras}

%% First author: in the order \author, \address, \urladdr, \email
\author[M. Haviar]{Miroslav Haviar}
\address{Department of Mathematics\\Faculty of Natural Sciences,\\
M. Bel University\\Tajovsk\'eho
40, 974~01 Bansk\'a Bystrica\\Slovakia\\
and\\
Department of Mathematics\\
and Applied Mathematics\\
University of Johannesburg\\PO Box 524, Auckland Park, 2006\\South~Africa
}
\email{miroslav.haviar@umb.sk}

%% Second author: in the order \author, \address, \urladdr, \email
\author[M. Plo\v s\v cica]{Miroslav Plo\v s\v cica}
\address{Institute of Mathematics, Faculty of Natural Sciences\\
\v Saf\'arik's University\\
Jesenn\'a 5\\
041 54 Ko\v sice, Slovakia}
\email{miroslav.ploscica@upjs.sk}

\thanks{The second author acknowledges support from Slovak grant VEGA 1/0097/18.}

\dedicatory{Dedicated to the memory of Dr. Milan Demko (1963 -- 2021)}

\subjclass{06B10, 06D30}

\keywords{de Morgan algebra, Priestley space, MS-algebra, Boolean skeleton, perfect
extension}

\begin{abstract}
An algebra $\A$ is called a \textit{perfect extension} of its
subalgebra~$\B$ if every congruence of $\B$ has a unique extension to
$\A$. This terminology was used by Blyth and Varlet [1994].
In the case of lattices, this concept was described by Gr\"atzer and Wehrung [1999] 
by saying that $\A$ is a \textit{congruence-preserving extension} of $\B$.

Not many investigations of this concept have been carried out so
far. The present authors in another recent study faced the question of when 
a de Morgan algebra $\M$ is perfect extension of its Boolean
subalgebra $B(\M)$, the so-called \textit{skeleton} of $\M$.
In this note a~full solution to this interesting problem is given.
The theory of natural dualities in the sense of Davey and Werner [1983]
and Clark and Davey [1998], as well as Boolean product representations, 
are used as the main tools to obtain the solution.
\end{abstract}

\maketitle

%%%%%%%%%%%%%%%%%%%%%%%%%%%%%%%%%%%%%%%%%%%%%%%%%%%%%%%%%%%%%%%%%%%%%%
%% MAIN MATTER
%%%%%%%%%%%%%%%%%%%%%%%%%%%%%%%%%%%%%%%%%%%%%%%%%%%%%%%%%%%%%%%%%%%%%%

\section{Introduction}\label{Intro}
Blyth and Varlet introduced MS-algebras as algebras abstracting de Morgan and
Stone algebras in~\cite{BV1} (see also \cite{BV3}). In \cite{BHP19}  we
presented, for a special class of MS-algebras, an analogue of
Gr\"atzer's problem \cite[Problem 57]{G71}, which was formulated for
distributive p-algebras. %such algebras abstracting de Morgan and Stone algebras.
Compared to the solution for the distributive p-algebras by Katri\v
n\'ak in \cite{K76}, we provided in \cite{BHP19} a much simpler 
though not very descriptive solution, and a short and elegant proof
of it. That is why we then  considered in \cite{BHP19} an analogue 
of the Gr\"atzer problem in the special case when the MS-algebra
$\L$ we investigated was a \textit{perfect extension} of its
largest Stone subalgebra. This case was subsequently reduced to
the property that a de Morgan subalgebra $\M$ of $\L$ was a perfect
extension of its Boolean skeleton $B(\M)$.

The description of general de Morgan algebras $\M$ which are perfect
extensions of their Boolean subalgebras $B(\M)$ turns out to be an
interesting problem in its own right, and the answer has not, until now, been known. 
In this paper we give a satisfactory solution to this problem.
We show that a de Morgan
algebra $\M$ is a perfect extension of its Boolean skeleton $B(\M)$ if
and only if $\M$ is a Boolean product of copies of
the four-element subdirectly irreducible de Morgan
algebra and its three- and two-element subalgebras, and that this is
equivalent to a simple and nice condition on the natural dual space
of $\M$.

\section{Preliminaries}\label{Prel}

By a \textit{de Morgan-Stone algebra} (or \textit{MS-algebra}) we mean 
an algebra $\L= (L; \vee, \wedge, ^{\c},0,
1)$ of type $(2,2,1,0,0)$ where $(L; \vee, \wedge, 0, 1)$ is a
bounded distributive lattice and $^{\c}$ is a unary operation such
that for all $x,y \in L$,
 $x\leq x^{\cc},\ (x\wedge y)^{\c}=x^{\c}\vee y^{\c},\ 1^{\c}=0$.

An important subvariety of MS-algebras is that of \textit{de Morgan
algebras}, which satisfy the additional identity $x=x^{\cc}$. For a de
Morgan algebra $\L$ its \textit{skeleton} is the Boolean subalgebra
$B(\L) =
\{x\in L \mid x \vee x^{\c} = 1\} $. % of complemented elements of $L$.

We also mention other distinguished subvarieties of
MS-algebras are those of \textit{Kleene algebras}, which are de
Morgan algebras satisfying the identity $(x \wedge x^{\c}) \vee y
\vee y^{\c} = y \vee y^{\c}$, the subvariety of \textit{Stone
algebras} characterized by the identity $x \wedge x^{\c} = 0$, and
the subvariety of \textit{Boolean algebras} determined by the
identity $x \vee x^{\c} = 1$.

It is also appropriate to recall here some other concepts.  An
algebra $\A$ satisfies the \textit{Congruence Extension Property}
(briefly ({CEP})) if for every subalgebra $\B$ of $\A$ and every
congruence $\theta$ of $\B$, $\theta$ extends to a congruence of $\A$.
An algebra $\A$ is said to be a \textit{perfect extension} of its
subalgebra $\B$, if every congruence of $\B$ has a unique extension to
$\A$ (see \cite[page 30]{BV3}). We notice that in the literature such
an $\A$ is also called a \textit{congruence-preserving extension} of
$\B$; we refer to a paper by Gr\"atzer and Wehrung \cite{GW} where
this concept is used in case of lattices. Of course, if $\A$ is a
perfect extension of $\B$, then the congruence lattices $\Con(\A)$ of
$\A$ and $\Con(\B)$ of $\B$ are isomorphic.

The classes of distributive lattices and of MS-algebras are known to
satisfy the (CEP) \cite{G71}, \cite{BV3}. However, very little seems
to be known when  in these or other classes of algebras, an algebra
$\A$ is a perfect extension of its subalgebra $\B$. In \cite{GW},
Gr\"atzer and Wehrung proved that every lattice with more than one
element has a proper congruence-preserving extension. In
\cite{BHP19} we showed that a so-called principal MS-algebra $\L$ is
a perfect extension of its largest Stone subalgebra $\L_{S}= \{x\in
L \mid x^{\c} \vee  x^{\cc} = 1\}$ if and only if the de Morgan
subalgebra $\L^{\cc}= \{x\in L \mid x = x^{\cc}\}$ of $\L$ is a
perfect extension of its Boolean skeleton $B(\L)$.

This leads to a natural problem, which will be formulated and 
addressed in the next section.

\section{Problem of perfect extensions for de Morgan algebras}\label{PE}

In this section we focus on the following problem: 

\begin{pr}\label{prob}
\textit{Describe de Morgan algebras $\M$ which are perfect extensions of
their Boolean skeletons $B(\M)$.}
\end{pr}

An answer to this problem has not been known. The main result of
our paper is a satisfactory solution to this problem. We characterize
de Morgan algebras that are perfect extensions of their Boolean
skeletons both algebraically and via dual spaces. For the latter we
find it convenient to employ the theory of natural dualities as
described in~\cite{DW83} and \cite{CD}.

It is well known that the variety of de Morgan algebras is equal to
$\ISP(\M_1)$, where $\M_1$ is the four-element subdirectly
irreducible de Morgan algebra, and $\I$, $\S$ and $\P$ denote the
well-known operators of \textit{isomorphic copies},
\textit{subalgebras} and \textit{products}, respectively (for
example, see \cite[4.3.15]{CD}). In the natural duality for the
variety $\ISP(\M_1)$ based on $\M_1$, the dual spaces $\X
=(X;f,\preccurlyeq,\tau)$ are the usual Priestley spaces
$(X;\preccurlyeq,\tau)$ endowed with an order-reversing
homeomorphism $f$ of order two \cite{CF77}. Every de Morgan algebra
is isomorphic to the set of all morphisms from its dual space $\X$
into the \textit{alter ego} of the algebra $\M_1$, which is the 
structure $\MT_1 = (\{0,a,b,1\};f,\preccurlyeq,\tau)$ (see
\cite[Theorem 4.3.16]{CD}). The alter ego is the ordered set
$(\{0,a,b,1\};\preccurlyeq)$ with $a$ and $b$ being the top and the
bottom elements, respectively, and the two atoms $0,1$ (see Figure~\ref{fig:M1}). 
The homeomorphism $f$ is defined by
$f(0)=0$, $f(1)=1$, $f(a)=b$, $f(b)=a$, and $\tau$ is
the discrete topology. The set of morphisms
inherits the de Morgan algebra structure from the power algebra
$\M_1^X$.

%%%%%%%%%%%%%%%%%%%%%%%%%%
\begin{figure}[ht]
\centering
\begin{tikzpicture}[scale=0.625]
  % \M_1
  \begin{scope}
    \node[anchor=north] at (-1,0) {$\M_1$};
    % Elements
    \node[unshaded] (00) at (0,0) {};
    \node[unshaded] (10) at (2,2) {};
    \node[unshaded] (01) at (-2,2) {};
    \node[unshaded] (11) at (0,4) {};
    % Order
    \draw[order] (00) -- (10) -- (11);
    \draw[order] (00) -- (01) -- (11);
    % Labels
    \node[label,anchor=west] at (00) {$0 = 1^\circ$};
    \node[label,anchor=west] at (10) {$b=b^\circ$};
    \node[label,anchor=east] at (01) {$a=a^\circ$};
    \node[label,anchor=east] at (11) {$1 = 0^\circ$};
  \end{scope}
  % \MT_1
  \begin{scope}[xshift=8cm]
    \node[anchor=north] at (-1,0) {$\MT_1$};
    % Elements
    \node[unshaded] (00) at (0,0) {};
    \node[unshaded] (10) at (2,2) {};
    \node[unshaded] (01) at (-2,2) {};
    \node[unshaded] (11) at (0,4) {};
    % Order
    \draw[order] (00) -- (10) -- (11);
    \draw[order] (00) -- (01) -- (11);
    % Labels
    \node[label,anchor=west] at (00) {$b$};
    \node[label,anchor=west] at (10) {$1$};
    \node[label,anchor=east] at (01) {$0$};
    \node[label,anchor=east] at (11) {$a$};
  \end{scope}
\end{tikzpicture}
\caption{The de Morgan algebra ${\mathbb M}_1$ and its alter
ego.}\label{fig:M1}
\end{figure}
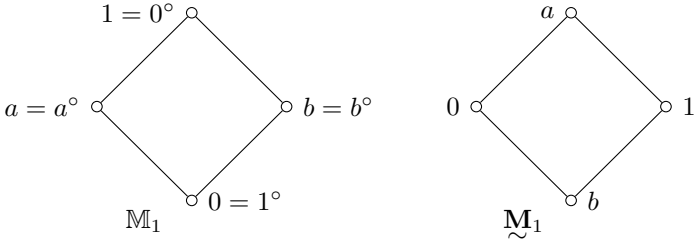
%%%%%%%%%%%%%%%%%%%%%%%%%%%%%%%%%%%

In our characterization we also use the concept of a Boolean product
(see \cite{UA}). A Boolean product of an indexed family $(\A_y)_{y\in
Y}$ of algebras, for a non-empty set $Y$, is a subdirect product $\A\le\Pi_{y\in Y}\A_y$, where
the set $Y$ can be endowed with a Boolean space topology so that
\begin{itemize}
\item[\rm(i)] the set $E(a,b)=\{y\in Y\mid a(y)=b(y)\}$ is clopen for every $a,b\in A$;
\item[\rm(ii)] if $a,b\in A$ and $K\subseteq Y$ is clopen, then $a\restriction K\cup b\restriction (Y\setminus K)\in A$.
\end{itemize}
The condition (ii) is sometimes called the patchwork property.

To shorten the proof of the main theorem, we first prove the following technical lemma.
\begin{lemma} Let $\X=(X;f,\preccurlyeq,\tau)$ be a Priestley space endowed with an order-reversing homeomorphism
$f$ of order two. Choose a
subset $Y\subseteq X$ satisfying the following conditions:
\begin{itemize}
\item[(a)] every $x\in X$ with $f(x)=x$ belongs to $Y$;
\item[(b)] if $x\ne f(x)$, then exactly one of these two elements belongs to $Y$.
\end{itemize}
Then the system
$$\sigma=\{Z\subseteq Y\mid Z\cup f(Z)\text{ is open in } \X\}$$
defines a Boolean topology on $Y$.\label{topol}
\end{lemma}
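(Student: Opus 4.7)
My plan is to verify the three Boolean properties (topology, compactness, Hausdorffness plus zero-dimensionality) of $\sigma$, exploiting throughout the observation that the condition (a)+(b) on $Y$ forces distinct points of $Y$ to lie in distinct $f$-orbits, and the identity
\[
Z\cup f(Z) \;=\; W \quad\text{whenever } Z = W\cap Y \text{ and } W = f(W),
\]
so that $\sigma$ is exactly $\{W\cap Y \mid W\subseteq X \text{ open and } f\text{-invariant}\}$.

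First I would check that $\sigma$ is a topology. Closure under arbitrary unions is immediate since $\bigcup_i Z_i \cup f(\bigcup_i Z_i) = \bigcup_i(Z_i\cup f(Z_i))$. For finite intersections, expanding $(Z_1\cup f(Z_1))\cap(Z_2\cup f(Z_2))$ produces two cross terms $Z_1\cap f(Z_2)$ and $f(Z_1)\cap Z_2$; the key point is that any $x\in Z_1\cap f(Z_2)$ lies in $Y\cap f(Y)$, which by (a)+(b) forces $f(x)=x$, and then $x\in Z_1\cap Z_2$. Consequently
\[
(Z_1\cup f(Z_1))\cap(Z_2\cup f(Z_2)) \;=\; (Z_1\cap Z_2)\cup f(Z_1\cap Z_2),
\]
and the left side is open, hence $Z_1\cap Z_2\in\sigma$.

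Next I would verify compactness. Any $\sigma$-open cover $\{Z_i\}$ of $Y$ lifts to an open cover $\{Z_i\cup f(Z_i)\}$ of $Y\cup f(Y)=X$; compactness of $\X$ yields a finite subcover, and the same finite index set covers $Y$ because whenever $y\in Y\cap f(Z_i)$ the argument above shows $y$ is a fixed point of $f$ and therefore already in $Z_i$. For Hausdorffness, if $y_1\ne y_2$ in $Y$ then condition (b) prevents them from being in a common $f$-orbit, so the two-point orbits $O_i=\{y_i,f(y_i)\}$ are disjoint closed subsets of the Boolean Priestley space $\X$; I pick disjoint clopen $V_1,V_2\subseteq X$ with $O_i\subseteq V_i$ and replace each by the $f$-invariant clopen $W_i := V_i\cap f(V_i)$, which still contains $O_i$. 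Then $Z_i:=W_i\cap Y\in\sigma$ separates $y_1$ and $y_2$.

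Finally, for zero-dimensionality I would show that each $\sigma$-open set $Z=W\cap Y$ with $W$ open and $f$-invariant contains a $\sigma$-clopen neighbourhood of any $y\in Z$: since $W$ is open and contains the finite set $\{y,f(y)\}$, zero-dimensionality of $\X$ provides a clopen $V$ with $\{y,f(y)\}\subseteq V\subseteq W$, and then $V\cap f(V)$ is clopen and $f$-invariant, so $(V\cap f(V))\cap Y$ is a $\sigma$-clopen neighbourhood of $y$ contained in $Z$. The complement is $\sigma$-open for the same reason (the complement of an $f$-invariant clopen set is $f$-invariant and clopen), so $\sigma$ has a basis of clopens.

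I expect the slightly delicate step to be the finite-intersection check: the fact that the cross terms $Z_1\cap f(Z_2)$ collapse into $Z_1\cap Z_2$ is precisely where conditions (a) and (b) on the transversal $Y$ are used, and once this is in hand the remaining verifications are routine manipulations of $f$-invariant clopen sets inside the Priestley space.
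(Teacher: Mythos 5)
Your proof is correct and follows essentially the same route as the paper's: both rest on the correspondence $Z\mapsto Z\cup f(Z)$ between subsets of $Y$ and $f$-invariant open (clopen) subsets of $X$, which hinges on the observation that $Y\cap f(Y)$ consists only of fixed points of $f$, together with the same lift-and-project compactness argument and clopen separation pulled back from $\X$. You additionally write out the topology axioms (which the paper leaves to the reader) and exhibit an explicit clopen basis, whereas the paper obtains the Boolean property from clopen separation of points plus compactness.
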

\begin{proof}
We leave it for the reader
to check that $\sigma$ is indeed a topology on $Y$. (But
notice that $Y$ is not a topological subspace of $\X$.) Now we check
the equality
$$(Y\setminus Z)\cup f(Y\setminus Z)=X\setminus (Z\cup f(Z))$$
for every $Z\subseteq Y$.
First, let $t\in Y\setminus Z$. We need to show that $t\in X\setminus (Z\cup f(Z))$,
that is  $t\notin f(Z)$. For a contradiction, let $t=f(u)$, $u\in Z$.
Then $u, f(u)\in Y$, which is only possible if $u=f(u)=t$, which contradicts $t\notin Z$.
Next, let $t\in f(Y\setminus Z)$. Then $t=f(u)$ for some $u\in Y\setminus Z$.
The injectivity of $f$ implies $t\notin f(Z)$. If $t\in Z$, then $u,f(u)\in Y$, hence $u=f(u)=t\in Z$,
a contradiction. So, $t\in X\setminus(Z\cup f(Z))$.

Conversely, let $t\in X\setminus (Z\cup f(Z))$. Then $t\notin f(Z)$ and $t=f(f(t))$ imply $f(t)\notin Z$.
If $t\in Y$, then clearly $t\in Y\setminus Z$.
If $t\notin Y$, then $f(t)\in Y\setminus Z$, so $t=f(f(t))\in f(Y\setminus Z)$.

As a consequence we obtain that  $Z\subseteq Y$ is closed (clopen) if and only if
$Z\cup f(Z)$ is closed (clopen) in $\X$. Let us check that the
space $Y$ is Boolean. Let $x,y\in Y$, $x\ne y$. Then also $x\ne
f(y)$, as either $f(y)=y$ or $f(y)\notin Y$. There exists a clopen
set $U\subseteq X$ with $x\in U$ and $y,f(y)\notin U$. Since $f$ is
a topological homeomorphism, the sets $f(U)$ and $U\cup f(U)$ are
also clopen. Let
$V := Y\cap(U\cup f(U))$.
Then $V\cup f(V)=U\cup f(U)$.
Indeed, from $V\subseteq U\cup f(U)$ %implies
it follows $f(V)\subseteq f(U\cup f(U))=f(U)\cup U$.
Conversely, if $u\in U\cup f(U)$, then either $u\in Y$ and hence $u\in V$, or $f(u)\in Y$ and hence
$f(u)\in V$ and $u\in f(V)$.

Hence, $V$ is clopen in $Y$ and $x\in V$. Since $f(y)\notin U$, we immediately have $y\notin U\cup f(U)$, so
$y\notin V$. Thus, every two points of $Y$ can be
separated by a clopen set. To check the compactness of $Y$, let
$\{Z_i\mid i\in I\}$ be an open cover of $Y$. Then $\{Z_i\cup
f(Z_i)\mid i\in I\}$ is an open cover of $\X$. Since this space is
compact, there exists a finite subset $I_0\subseteq I$ such that
$\{Z_i\cup f(Z_i)\mid i\in I_0\}$ covers $\X$. Then it is easy to
check that $\{Z_i\mid i\in I_0\}$ covers $Y$.
\end{proof}

\begin{theo} Let $\M$ be a de Morgan algebra. The following %conditions
are equivalent:
\begin{itemize}
\item[\rm(1)] $\M$ is a perfect extension of its Boolean skeleton $B(\M)$.
\item[\rm(2)] $\M$ is a Boolean product of copies of $\M_1$ and its subalgebras $\{0,a,1\}$ and
$\{0,1\}$.
\item[\rm(3)] In the dual space $\X$ of $\M$, $x\preccurlyeq y$ implies $x=y$ or $x=f(y)$ for all $x,y\in X$.
\end{itemize}
\end{theo}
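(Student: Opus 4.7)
The plan is to establish $(1)\Leftrightarrow (3)$ and $(3)\Leftrightarrow (2)$, using the natural duality throughout and Lemma~\ref{topol} to build the Boolean product in one direction. The key preliminary fact is the dual description of $B(\M)$: an element $a\in\M$ lies in $B(\M)$ iff $a\vee a^{\circ}=1$, and since in $\M_1$ this equation holds only for $0$ and $1$, the corresponding morphism $\X\to\MT_1$ must land in $\{0,1\}\subseteq\MT_1$. But in $\MT_1$ the points $0,1$ are incomparable atoms both fixed by $f$, so such a morphism must be constant on $f$-orbits and constant on $\preccurlyeq$-comparable pairs. Hence $B(\M)$ is isomorphic to the Boolean algebra of clopen subsets of $\X/{\sim}$, where $\sim$ is the equivalence relation on $\X$ generated by the $f$-orbit relation together with the comparability relation. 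Congruences of $\M$ correspond to closed $f$-invariant subsets of $\X$, congruences of $B(\M)$ to the $\sim$-saturated closed subsets, and the restriction map corresponds to $\sim$-saturation.

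For $(1)\Leftrightarrow (3)$: under condition (3), every $\preccurlyeq$-comparability lies within an $f$-orbit, so $\sim$ collapses to the $f$-orbit relation; every closed $f$-invariant subset is then automatically $\sim$-saturated and the restriction map is bijective. Conversely, if (3) fails and $x\preccurlyeq y$ with $y\notin\{x,f(y)\}$, then $y$ lies in the $\sim$-class of $x$ but outside $O_x=\{x,f(x)\}$, so the closed $f$-invariant sets $O_x$ and $\pi^{-1}(\pi(O_x))$ (where $\pi\colon\X\to\X/{\sim}$) are distinct yet have the same image in $\X/{\sim}$, yielding two distinct congruences of $\M$ with the same restriction to $B(\M)$, contradicting (1).

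For $(2)\Leftrightarrow (3)$: the direction $(2)\Rightarrow (3)$ is routine, since the duals of $\M_1$, $\{0,a,1\}$ and $\{0,1\}$ each satisfy (3) internally (a 2-antichain with $f$-swap, a 2-chain with $f$-swap, and a singleton fixed by $f$), while the Boolean product structure forces comparabilities in $\X$ to be stalk-local. For $(3)\Rightarrow (2)$ I apply Lemma~\ref{topol}: choose $Y\subseteq\X$ containing all $f$-fixed points and exactly one representative of each non-fixed $f$-orbit, obtaining a Boolean topology $\sigma$. Set $\M_y=\{0,1\}$, $\{0,a,1\}$, or $\M_1$ according as $y=f(y)$, $\{y,f(y)\}$ is a 2-chain, or $y,f(y)$ are incomparable (these three cases exhaust the possibilities under (3)). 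The evaluation map $\Phi\colon\M\to\prod_{y\in Y}\M_y$, encoding $a\in\M$ by the pair $(a(y),a(f(y)))$ at each $y$, is an injective subdirect embedding. To verify the Boolean product conditions: for (i), the equaliser of $\Phi(a),\Phi(b)$ is $Y$ intersected with the clopen $f$-invariant set $W=\{x\mid a(x)=b(x)\}\cap f^{-1}\{x\mid a(x)=b(x)\}$, which is clopen in $\sigma$ by Lemma~\ref{topol}; for (ii), given a clopen $K\subseteq Y$ in $\sigma$, Lemma~\ref{topol} makes $V=K\cup f(K)$ a clopen $f$-invariant subset of $\X$, and condition (3) forces $V$ to be an up-down-set (if $z\in V$ and $z\preccurlyeq w$, then $w=z$ or $w=f(z)\in V$), so gluing $a$ on $V$ with $b$ on $\X\setminus V$ yields a valid de Morgan morphism $\X\to\MT_1$ realising the required patch. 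The main obstacle is this last step: one must align the topological data of Lemma~\ref{topol} with the order and involution of $\X$, and it is precisely condition (3) that ensures the $f$-invariant clopen subsets coming from clopens of $Y$ are automatically order-convex, which in turn is what makes the patchwork gluing of morphisms well-defined.
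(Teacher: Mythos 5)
Your cycle is $(1)\Leftrightarrow(3)$ and $(3)\Leftrightarrow(2)$, whereas the paper closes the loop as $(2)\Rightarrow(1)\Rightarrow(3)\Rightarrow(2)$. Your $(1)\Rightarrow(3)$ (via the two closed $f$-invariant sets $\{x,f(x)\}$ and its enlargement) and your $(3)\Rightarrow(2)$ (via Lemma~\ref{topol}, the restriction to $Y$, and the observation that under (3) the sets $K\cup f(K)$ are simultaneously up-sets and down-sets, so the patchwork glues to a morphism) are essentially the paper's arguments. The genuine gap is the direction $(2)\Rightarrow(3)$, which you dismiss as ``routine, since \dots\ the Boolean product structure forces comparabilities in $\X$ to be stalk-local.'' That is precisely the nontrivial content: to know that the dual space of a Boolean product $\M\le\Pi_{y\in Y}\A_y$ decomposes as a union of the duals of the stalks with no comparabilities across stalks, you must first show that every de Morgan homomorphism $\M\to\M_1$ (equivalently, every point of the dual space $\X$) factors through some stalk $\A_y$. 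This is a real theorem about Boolean products with a subdirectly irreducible target (not a formal consequence of conditions (i) and (ii)), and you give no argument for it. The paper sidesteps this entirely by proving $(2)\Rightarrow(1)$ directly: an explicit congruence computation with the elements $s_N$, $t_N$ and the clopen sets $J$, $K$, $L$ showing that $(u,v)\in\theta$ iff $(0,s_{E(u\wedge v,u\vee v)})\in\theta$, so that every congruence is determined by its restriction to $B(\M)$. If you want to keep your cycle, you must either prove the factorization result or replace $(2)\Rightarrow(3)$ by such a direct algebraic argument.

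A secondary, fixable issue: your identification of the dual space of $B(\M)$ with $\X/{\sim}$, and of the restriction map $\Con(\M)\to\Con(B(\M))$ with $\sim$-saturation, is asserted rather than proved. The quotient $\X/{\sim}$ is the Stone dual of $B(\M)$ only if distinct $\sim$-classes can be separated by clopen, $f$-closed, order-convex sets, and the $\sim$-saturation of a closed set need not be closed when (3) fails (so $\pi^{-1}(\pi(O_x))$ may not correspond to a congruence; using $\{y,f(y)\}$ instead, as the paper does, avoids this). Under (3) the separation does go through, because for clopen $U$ the set $U\cup f(U)$ is automatically $f$-closed and both an up-set and a down-set, but that verification belongs in the proof. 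Finally, in $(3)\Rightarrow(2)$ you should take $\A_y$ to be the actual image $\{\varphi(y)\mid\varphi\in M\}$ rather than prescribing it from the shape of the orbit $\{y,f(y)\}$; otherwise the product need not be subdirect.
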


\begin{proof} (2)$\Longrightarrow$(1) Let $\M\le\Pi_{y\in Y} \A_y$ be a Boolean product, where each $\A_y$
is equal to $\M_1$ or $\{0,a,1\}$ or $\{0,1\}$. The elements of $\M$
have the form $u=(u(y))_{y\in Y}$. The skeleton $B(\M)$ consists
exactly of those elements of $\M$ whose every coordinate is $0$ or
$1$. For every clopen set $N\subseteq Y$ we define
$$s_N(y)=\left\{\begin{array}{ll} 0\quad{\rm if}\ y\in N,\\
                               1\quad\text{if}\ y\notin N.
\end{array}\right.$$
Since constant $0$ and constant $1$ are elements of $\M$, the patchwork property implies
that $s_N\in M$ for every $N$.
Clearly, $s_N\in B(\M)$, so let $t_N$ denote its complement. We need to
prove that every congruence of $\M$ is determined by its restriction
to $B(\M)$.

For every $u,v\in M$, the set $E(u,v)=\{y\in Y\mid u(y)=v(y)\}$ is
clopen. Considering the cases $v=0$ and $v=1$ we obtain that the
sets $\{y\in Y\mid u(y)=0\}$, $\{y\in Y\mid u(y)=1\}$, and
consequently, $\{y\in Y\mid u(y)\in\{a,b\}\}$, are clopen.

Let $\theta\in\Con (\M)$. We claim that
$$(u,v)\in\theta\quad\text{if and only if}\quad (0,s_{E(u\wedge v, u\vee v)})\in\theta.$$
It suffices to show it for $u\le v$. Consider the sets
\begin{align}
J := &\{y\in Y\mid u(y)=0,\ v(y)=1\},\notag\\
K := &\{y\in Y\mid u(y)=0,\ v(y)\in\{a,b\}\},\notag\\
L := &\{y\in Y\mid u(y)\in\{a,b\},\ v(y)=1\}.\notag
\end{align}
These sets are clopen and $J\cup K\cup L$ is the complement of
$E(u,v)$, so it follows that $t_{J\cup K\cup L}=s_{E(u,v)}$.

Assume that $(u,v)\in\theta$. Then $(0,t_J)=(u\wedge t_J,v\wedge
t_J)\in\theta$. Further, $u\wedge t_K=0$, so $(0,v\wedge
t_K)\in\theta$, which implies that $(1,v^\circ\vee t_K^\circ)\in
\theta$. Taking the meet with $t_K$ we obtain that $(t_K, (v^\circ\vee
t_K^\circ)\wedge t_K)=(t_K,v^\circ\wedge t_K)\in\theta$. Clearly,
$v^\circ\wedge t_K=v\wedge t_K$, whence we get
$(0,t_K)\in\theta$. Similarly, $(u\wedge t_L,v\wedge t_L)=(u\wedge
t_L,t_L)\in\theta$, which implies $(u^\circ\vee
t_L^\circ,t_L^\circ)\in\theta$. Taking the meet with $t_L$ and using
$u^\circ\wedge t_L=u\wedge t_L$ we obtain that $(0,u\wedge
t_L)\in\theta$, hence also $(0,t_L)\in \theta$. Consequently,
$(0,s_{E(u,v)})=(0,t_J\vee t_K\vee t_L)\in\theta$.

Conversely, let $(0,s_{E(u,v)})\in\theta$. It is easy to see that
$(0\vee u)\wedge v=u$ and $(s_{E(u,v)}\vee u)\wedge v=v$. Hence
$(u,v)\in\theta$.

(1)$\Longrightarrow$(3)  Assume that $\M$  is equal to the set of
all morphisms from the dual space $\X$ into $\MT_1$ and
that (3) is not satisfied. Hence, we have $x\preccurlyeq y$ such
that $x\ne y$ and $x\ne f(y)$. Then $\{x,f(x)\}$ and $\{y,f(y)\}$
are distinct closed substructures of $\X$, so they determine
distinct congruences on $\M$, namely
\begin{align}
\alpha &=\{(\varphi,\psi)\in M^2\mid
\varphi\restriction{\{x,f(x)\}}=\psi\restriction{\{x,f(x)\}}\},\notag\\
\beta &=\{(\varphi,\psi)\in M^2\mid
\varphi\restriction{\{y,f(y)\}}=\psi\restriction{\{y,f(y)\}}\}.\notag
\end{align}

 We claim
that $\alpha$ and $\beta$ coincide on $B(\M)$. Take morphisms $\varphi,\psi\in
B(\M)$, hence we have for their ranges $\rng(\varphi), \rng(\psi)\subseteq\{0,1\}$. Since the
maps $\varphi,\psi$ preserve $\preccurlyeq$, we have $\varphi(x)\preccurlyeq\varphi(y)$
and $\psi(x)\preccurlyeq\psi(y)$. For elements in $\{0,1\}$ this means $\varphi(x)=\varphi(y)$ and
$\psi(x)=\psi(y)$. The morphisms commute with $f$, so
$\varphi(f(x))=f(\varphi(x))=f(\varphi(y))=\varphi(f(y))$ and similarly  $\psi(f(x))=\psi(f(y))$. Hence,
$(\varphi,\psi)\in\alpha$ if and only if $(\varphi,\psi)\in\beta$. This proves that $\M$ is not a
perfect extension of $B(\M)$.

(3)$\Longrightarrow$(2) As in the previous part, assume that $\M$ is
equal to the set of all morphisms from the dual space $\X$
into $\MT_1$. Let (3) be satisfied. Choose a
subset $Y\subseteq X$ satisfying the conditions (a) and (b) from Lemma \ref{topol}.

The set $M' :=\{\varphi\restriction Y\mid \varphi\in M\}$ clearly forms a
subalgebra of $\M_1^Y$. The assignment
$\varphi\mapsto\varphi\restriction Y$ is a surjective homomorphism
$\M\to \M'$. It is also injective: if $\varphi\ne\psi$, then
$\varphi(x)\ne\psi(x)$ for some $x\in X$. If $x\notin Y$, then
$f(x)\in Y$ and $\varphi(f(x))=f(\varphi(x))\ne
f(\psi(x))=\psi(f(x))$, as $f$ is a bijection. Hence, $\M$ is
isomorphic to $\M'$ and we will prove that $\M'$ is a Boolean product.

By Lemma \ref{topol}, the space $Y$ is Boolean. For every $y\in Y$, the set
$A_y=\{\varphi(y)\mid \varphi\in M\}$ forms a subalgebra $\A_y$ of $\M_1$, so it is equal to
$\{0,1\}$, $\{0,a,1\}$, $\{0,b,1\}$ or $\M_1$. Since $\{0,a,1\}$ is isomorphic to $\{0,b,1\}$,
the algebra $\M'$ is a subdirect product of algebras isomorphic to $\{0,1\}$, $\{0,a,1\}$
and $\M_1$. It remains to check the conditions
from the definition of the Boolean product. First we will check the conditions concerning the equalizers.
Let $\varphi,\psi\in M$ and let $K=\{y\in Y\mid \varphi(y)=\psi(y)\}$.
We claim that $K\cup f(K)=\{x\in X\mid \varphi(x)=\psi(x)\}$.
Indeed, if $x=f(y)$ for some $y\in K$, then
$\varphi(x)=\varphi(f(y))=f(\varphi(y))=f(\psi(y))=\psi(f(y))=\psi(x)$.
For the reverse set containment, if $\varphi(x)=\psi(x)$, then $x\in Y$ (and hence $x\in
K$) or $f(x)\in Y$ (hence $f(x)\in K$ and $x\in f(K)$). The set
$\{x\in X\mid \varphi(x)=\psi(x)\}$ is clopen in $\X$, because it is
a union of sets $\{x\in X\mid \varphi(x)=c\}\cap\{x\in X\mid
\psi(x)=c\}$ for $c\in\{0,1,a,b\}$. (And these sets are clopen
because $\varphi$ and $\psi$ are continuous.) By the proof of Lemma \ref{topol},
the set $K$ is clopen in $Y$.

Now let $K\subseteq Y$ be clopen in $Y$ and $\varphi,\psi\in M$.
Then the set $K\cup f(K)$ is clopen in $\X$. We define $\rho: X\to
M_1$ as follows:
$$\rho(x)=\left\{\begin{array}{ll} \varphi(x)\quad{\rm if}\ x\in K\cup f(K),\\
                               \psi(x)\quad\text{if}\ x\notin K\cup f(K).
\end{array}\right.$$
Since both $K\cup f(K)$ and its complement are clopen, closed under
$f$, and any pair of elements $x\in K\cup f(K)$ and $y\notin K\cup f(K)$ is
incomparable with respect to $\preccurlyeq$, the mapping $\rho$ is a morphism, hence
$\rho\in M$. Clearly, $\rho$ coincides with $\varphi$ on $K$ and
with $\psi$ on $Y\setminus K$.
\end{proof}

For finite $Y$, the Boolean products are the usual direct products, so
we have the following result as an immediate consequence:

\begin{coro} A finite de Morgan algebra $\M$ is a perfect extension of $B(\M)$ if and only if it is
a direct product of finitely many copies of $\{0,1\}$, $\{0,a,1\}$
and $\M_1$.
\end{coro}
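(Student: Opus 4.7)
The plan is to deduce this from the theorem's equivalence (1) $\Longleftrightarrow$ (2), exploiting the observation that a Boolean product indexed by a finite Boolean space is nothing other than the full direct product of its factors.

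For the easy direction, suppose $\M \cong \A_1 \times \cdots \times \A_n$ with each $\A_i \in \{\{0,1\}, \{0,a,1\}, \M_1\}$. Taking $Y=\{1,\dots,n\}$ with the discrete topology gives a finite, hence Boolean, space; condition (i) of the definition of Boolean product is automatic since every subset of $Y$ is clopen, and the patchwork condition (ii) holds trivially because every such piecewise combination already lies in the direct product. Hence $\M$ is a Boolean product of the listed algebras, and the theorem yields (1).

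For the converse, assume $\M$ is finite and satisfies (1). The theorem furnishes a Boolean product representation $\M \le \prod_{y\in Y}\A_y$ with each $\A_y$ among $\{0,1\}$, $\{0,a,1\}$, $\M_1$ over some Boolean space $Y$. I would first observe that $Y$ must be finite: the embedding is subdirect, so every projection is surjective onto a factor of size at least $2$, giving $|M|\ge 2^{|Y|}$, which combined with $|M|<\infty$ forces $|Y|<\infty$. A finite Hausdorff space carries the discrete topology, so every singleton is clopen; applying the patchwork property (ii) inductively to singletons, together with the subdirectness of the embedding, then shows that every element of $\prod_{y\in Y}\A_y$ lies in $M$, so $\M$ coincides with the full direct product of these factors. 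The argument is essentially bookkeeping, with no real obstacle beyond the cardinality bound just noted, which is immediate from the fact that each allowed factor has at least two elements.
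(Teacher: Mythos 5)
Your overall route is the same as the paper's: combine the equivalence (1)\,$\Leftrightarrow$\,(2) of the theorem with the observation that over a finite index set a Boolean product is just the full direct product (the paper states exactly this and calls the corollary immediate). Your easy direction and your patchwork-on-singletons argument for recovering the full product are correct. However, one step in your converse is a genuine non sequitur: subdirectness alone does \emph{not} give $|M|\ge 2^{|Y|}$. The diagonal $\{(x,x,x,\dots)\mid x\in\{0,1\}\}$ inside $\{0,1\}^{Y}$ is a subdirect product with every projection surjective onto a two-element factor, yet it has exactly two elements no matter how large $Y$ is. So from ``$\M$ is finite and sits subdirectly in $\prod_{y\in Y}\A_y$'' you cannot conclude that $Y$ is finite.

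The conclusion you want is nevertheless true, and there are two easy repairs. (a) Track the construction in the theorem's proof of (3)\,$\Rightarrow$\,(2): the index set $Y$ there is chosen as a subset of the dual space $X$ of $\M$, and $X$ is finite whenever $\M$ is finite, so finiteness of $Y$ comes for free. (b) Alternatively, use the patchwork property rather than bare subdirectness: given $n$ distinct points $y_1,\dots,y_n$ of the Boolean space $Y$, separate them by a clopen partition $K_1,\dots,K_n$ of $Y$, choose $a_i,b_i\in M$ with $a_i(y_i)\ne b_i(y_i)$ (subdirectness), and patch over the $K_i$ to produce $2^n$ pairwise distinct elements of $M$; hence a finite $\M$ forces $Y$ to be finite. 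With either repair in place, the rest of your argument goes through and matches the paper's deduction.
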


\paragraph{Acknowledgements.}
The first author acknowledges his appointment as a Visiting Professor at the University of Johannesburg in the years 2020-2023. Both authors express their thanks to the editor and the referee for many useful suggestions concerning the final improvement of the text.

%%%%%%%%%%%%%%%%%%%%%%%%%%%%%%%%%%%%

\begin{thebibliography}{10}

\bibitem{BHP19}  %%  a paper in a journal
Badawy,~A., Haviar M., Plo\v s\v cica M.:
Congruence pairs of principal MS-algebras and perfect extensions.
Math. Slovaca \textbf{70}, 1275--1288 (2020)


\bibitem{BV1}
Blyth, T.S., Varlet, J.C.:
On a common abstraction of de Morgan algebras and Stone algebras.
Proc.\,Roy.\,Soc.\,Edinburgh \textbf{94A}, 301--308 (1983)

%\bibitem{BV2}
%{\sc Blyth},~T., and J.~{\sc Varlet}, `Subvarieties of the class of MS-algebras', \emph{Proc.\,Roy.\,Soc.\,Edinburgh} 95A:157--169, 1983.

\bibitem{BV3}
Blyth, T.S., Varlet, J.C.:
Ockham algebras.
Oxford University Press (1994)

\bibitem{UA}
Burris, S., Sankappanavar, H.P.:
A~Course in Universal Algebra.
Springer-Verlag (1981)

\bibitem{CD}
Clark, D.M., Davey, B.A.:
Natural Dualities for the Working Algebraist.
Cambridge University Press, Cambridge (1998)

\bibitem{CF77}
Cornish, W.H., Fowler, P.R.:
Coproducts of de Morgan algebras.
Bull. Austral. Math. Soc. \textbf{16}, 1--13 (1977)

\bibitem{DW83}
Davey, B.A., Werner, H.:
Dualities and equivalences for varieties of algebras.
In: Contributions to Lattice Theory (Szeged, 1980), (Huhn, A.P., Schmidt, E.T., eds.),
Colloq. Math. Soc. J\'{a}nos Bolyai \textbf{33},  pp. 101--275. North-Holland, Amsterdam (1983)

\bibitem{G71}
Gr\"atzer, G.:
Lattice Theory. First Concepts and Distributive Lattices.
W.H.~Freeman, San Francisco, California (1971)

\bibitem{GW}
Gr\"atzer, G., Wehrung, F.:
Proper congruence-preserving extensions of lattices.
Acta Math. Hungar. \textbf{85}, 175--185 (1999)

\bibitem{K76}
Katri\v n\'ak,~T.:
On a problem of G. Gr\"atzer.
Proc. Amer. Math. Soc.  \textbf{57}, 19--24 (1976)

\end{thebibliography}
\end{document}